\documentclass[12pt]{amsart}
\usepackage{a4wide, amssymb,amsfonts,amsmath,amsthm,amscd,soul}

\newcommand{\Hom}[2]{\mathrm{Hom}(#1,#2)}
\newcommand{\End}[1]{\mathrm{End}(#1)}

\newcommand{\Ann}[1]{\mathrm{Ann}(#1)}
\newcommand{\lann}[2]{\mathrm{l.ann}_{#1}(#2)}
\newcommand{\rann}[2]{\mathrm{r.ann}_{#1}(#2)}

\newcommand{\Soc}[1]{\mathrm{Soc}(#1)}

\newcommand{\ZZ}{\mathbb{Z}}

\theoremstyle{plain}

\newtheorem*{lem}{Lemma}
\newtheorem*{thm}{Theorem}

\newtheorem*{cor}{Corollary}
\newtheorem*{prop}{Proposition}

\title{Endomorphism rings of modules over prime rings}
\author{Mohammad Baziar}
\address{​Department of Mathematics, Yasouj University, Yasouj, Iran, 75914}
\email{ baziar@math.iut.ac.ir}
\author{Christian Lomp}
\address{Department of Mathematics, University of Porto, Porto, Portugal}
\email{clomp@fc.up.pt}
\thanks{ The second author was partially supported by Centro de Matem\'atica da Universidade do Porto (CMUP), financed by FCT (Portugal) through
the programs POCTI (Programa Operacional Ci\^encia, Tecnologia, Inova\c{c}\~ao) and POSI (Programa Operacional Sociedade da Informa\c{c}\~ao), with national and European community structural funds.}

\begin{document}

\keywords{Prime modules}


\begin{abstract}
Endomorphism rings of modules appear as the center of a ring, as the fix ring of ring with group action or as the subring of constants of a derivation.
This note discusses the question whether certain $*$-prime modules have a prime endomorphism ring. Several conditions are presented that guarantee the primness of the endomorphism ring. The contours of a possible example of a $*$-prime module whose endomorphism ring is not prime are traced.
\end{abstract}
\maketitle
\section{Introduction}

Endomorphism rings of modules appear in many ring theoretical situations. For example the center $C(R)$ of a (unital, associative) ring $R$ is isomorphic to the endomorphism ring of $R$ 
seen as a bimodule over itself, i.e. as a left $R\otimes R^{op}$-module. The subring $R^G$ of elements that are left invariant under the action of a group $G$ on $R$ is isomorphic to the 
endomorphism ring of $R$ seen as a left module over the skew group ring $R*G$. The subring $R^\partial$ of constants of a derivation $\partial$ of $R$ is isomorphic to the endomorphism ring of $R$ 
seen as a left module over its differential operator ring $R[x,\partial]$. More generally the subring $R^H$ of elements invariant under the action of a Hopf algebra $H$ acting on $R$ is isomorphic to 
the endomorphism ring of $R$ seen as left module over the smash product $R\# H$. This identifications motivated the use of module theory in the study of Hopf algebra actions in \cite{BorgesLomp11b, Lomp04, Lomp05a, Lomp05b}.

Prime numbers and prime ideals are basic concepts in algebra.  While the idea of a prime ideal is well established, the idea of a prime submodule of a module is not. 
The purely essence of a prime ideal had been destilled already by Birkhoff in the concept of a prime element in a partially ordered groupoid. 
In \cite{bican}, Bican et al. introduced an operation on the lattice of submodules of a module, turning it into a partially ordered groupoid. 
Let $R$ be any (associative, unital) ring and $M$ a left $R$-module. For any submodules $N,L$ we denote 
$$N*L = N\Hom{M}{L} = \sum \{ (N)f \mid f:M\rightarrow L \}.$$
Note that we will write homomorphisms oposite of scalars, i.e. on the right side of an element.
A submodule $P$ is a prime element in $M$ if for any two submodules $N,L$ of $M$
$$ N*L \subseteq P \Rightarrow N\subseteq P \:\:\mbox{ or }\:\: L\subseteq P.$$
Those modules whose zero submodule is a prime element had been termed $*$-prime modules, i.e. $N*L \neq 0$ for all non-zero $N,L\subseteq M$. Of course for $M=R$, the $*$-product equals the product of left ideals and $R$ is a $*$-prime left $R$-module if and only if it is a prime ring. The meaning of the module theoretic prime concept for a ring $R$ with Hopf algebra action $H$ seen as left $R\# H$-module has been studied in \cite{Lomp05b} in connection with 
an open question in this area, due to Miriam Cohen, asking whether $R\# H$ is a semiprime algebra provided $R$ is semiprime and $H$ is semisimple (see \cite{Cohen85}).

The main purpose of this note is to shed new light into the following question which had been left open in \cite{Lomp05b}:

\medskip
{\bf Question:} Is the endomorphism ring of a $*$-prime module a prime ring ?
\medskip

From \cite[Proposition 4.2]{Lomp05b} it is known that the answer is yes, if the $*$-prime module $M$ satisfies a light projectivity condition. 
Although we were unable to answer this question completely we will indicate various sufficient conditions 
for a $*$-prime module to have a prime endomorphism ring which narrows down the class of possible examples that could provide a negative answer. 

Let $M$ be a left $R$-module.  $S=\mathrm{End}_R(M)$ shall always denote the endomorphism ring of $M$. 
Since any $*$-prime module $M$ has a prime annihilator ideal $\Ann{M}$ and since $\mathrm{Hom}_R(M,N)=\mathrm{Hom}_{R/\Ann{M}}(M,N)$ holds for any submodule $N$ of $M$, \emph{\bf we will assume throughout this note that 
$M$ is a faithful left module over a (unital, associative) prime ring $R$}.

\subsection{Retractable modules}
A $*$-prime module $M$ is  \emph{retractable}, i.e.  $\Hom{M}{K}\neq 0$ whenever $0\neq K\subseteq M$.
Note that it is always true that a retractable module with prime endomorphism ring is a $*$-prime module (see \cite[Theorem 4.1]{Lomp05b}) and our question is whether this sufficient condition is also necessary.
The retractability condition (called \emph{quotient like} in \cite{haghany} and \emph{slightly compressible} in \cite{Smith05}) stems from the non-degeneration of the standard Morita context 
$(R,M,M^*,S)$ between a ring $R$ and the endomorphism ring $S$ of a module $M$  via $M^*=\Hom{M}{R}$ (see \cite{Zelmanowitz67}). 
In the case of a group $G$ acting on a ring $R$, the retractability of 
$R$ as $R*G$-module says that every non-zero $G$-stable left ideal contains a non-zero fixed element. 
The Bergman-Isaac theorem \cite{BergmanIsaacs} says that $R$ is retractable as left $R*G$-module 
if $G$ is a finite group acting on a semiprime ring $R$ such that no non-zero element of $R$ has additive $|G|$-torsion. This fact had been used by Fisher and Montgomery in \cite{FisherMontgomery} to prove 
that $R*G$ is semiprime provided $R$ is semiprime and has no $|G|$-torsion, which originally with \cite{CohenRowen} motivated Cohen's question for Hopf algebra actions.

For a locally nilpotent derivation $\partial$ of a ring $R$ it had been shown in \cite[Lemma 3.8]{BorgesLomp11b} that $R$ is açways retractable as $R[x,\partial]$-module.
Rings $R$ that are retractable as $R\otimes R^{op}$-module are those whose non-zero ideals contain non-zero elements like for example in the case of 
semiprime PI-rings (\cite[Theorem 2]{Rowen73}), central Azumaya rings (\cite[26.4]{wisbauer96}) or enveloping algebras of semisimple Lie algebras (\cite[4.2.2]{Dixmier}).
The retractability condition can be expressed by saying that the function from the lattice of left $R$-submodules of the module $M$ to the lattice of left ideals of $S$ defined as 
$N\mapsto \Hom{M}{N}$ for submodules $N$ of $M$ has the property that the only submodule mapped to the zero left ideal of $S$ is the zero submodule.

\subsection{Endoprime modules}
It is known by \cite[1.3]{Lomp05b} that the endomorphism ring of a right R-module $M$ is prime if and only if
$\Hom{M/N}{M}=0$ for all non-zero fully invariant, $M$-generated submodules $N$ of $M$. 
With slightly different notation,  Haghany and Vedadi defined a module $M$ to be \emph{endoprime} if $\Hom{M/K}{M} = 0$ for all non-zero fully invariant submodules $K$ of $M$ (see \cite{haghany}).  
Thus endoprime modules have a prime endomorphism ring. Since for all submodules $K$ of $M$, 
$\Hom{M}{K}\Hom{M/K}{M}=0$, we see that a retractable module $M$ with prime endomorphism ring $S$ is endoprime.
In other words a retractable module has a prime endomorphism ring if and only if it is endoprime. 
Since $*$-prime modules are rectractable, our question can be equivalently reformulated to 

{\bf Question:} Are $*$-prime modules endoprime in the sense of Haghany and Vedadi ?

\subsection{Semi-projective modules}

As mentioned before under a light projectivity condition our question has an affirmative answer.
Recall from \cite{wisbauer96} that a module $M$ is called \emph{semi-projective} if any diagram
\[\begin{CD}  @. M \\
@. @VVgV\\
M @>f>> K @>>> 0\end{CD}\]
with $K\subseteq M$ can be extended by some endomorphism of $M$. In other words, $M$ is semi-projective
if and only if for any endomorphism $f$ of $M$ we have $\Hom{M}{(M)f} = Sf$.

\begin{lem}[{\cite[Proposition 4.2]{Lomp05b}}]\label{semiprojective}
 A semi-projective module is $\star$-prime if and only if it is a retractable module with prime endomorphism ring.
\end{lem}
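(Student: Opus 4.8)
The plan is to treat the two implications separately, noting in advance that only the forward direction will need semi-projectivity. For ``retractable with prime endomorphism ring $\Rightarrow$ $*$-prime'', I would simply invoke the general fact recorded above that a retractable module with prime $S$ is always $*$-prime (\cite[Theorem 4.1]{Lomp05b}); no semi-projectivity is involved. A self-contained argument is equally short: given non-zero submodules $N,L$, retractability supplies non-zero $f\in\Hom{M}{N}$ and $g\in\Hom{M}{L}$ inside $S$, primeness of $S$ yields $h$ with $fhg\neq 0$, and since $hg\in\Hom{M}{L}$ and $(M)f\subseteq N$ one gets $0\neq (M)(fhg)\subseteq (N)(hg)\subseteq N*L$, so $N*L\neq 0$.

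For the converse, suppose $M$ is semi-projective and $*$-prime. Retractability is automatic, since every $*$-prime module is retractable, so the entire task reduces to showing that $S$ is prime. I would argue by contraposition: assuming $S$ is not prime, choose non-zero $f,g\in S$ with $fSg=0$ and set $N=(M)f$ and $L=(M)g$, both non-zero. The goal is then to show that this pair violates $*$-primeness.

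The decisive step, and the only place semi-projectivity enters, is the identity $\Hom{M}{L}=\Hom{M}{(M)g}=Sg$. Granting it, I would compute
\[ N*L = N\,\Hom{M}{L} = N(Sg) = \sum_{h\in S}(N)(hg), \]
and then write a typical element of $N=(M)f$ as $(m)f$, so that each summand becomes $(m)(fhg)=0$ because $fhg=0$ for every $h\in S$. Hence $N*L=0$ with $N,L\neq 0$, contradicting $*$-primeness, and therefore $S$ is prime. I expect the main---indeed the only---obstacle to be recognizing that semi-projectivity is precisely what converts the ring-theoretic hypothesis $fSg=0$ into the module-theoretic vanishing $N*L=0$: without the inclusion $\Hom{M}{(M)g}\subseteq Sg$ the computation breaks down, which is exactly why the statement genuinely requires the semi-projectivity assumption.
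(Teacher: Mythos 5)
Your proof is correct, and since the paper states this lemma only by citation to \cite[Proposition 4.2]{Lomp05b} without reproducing a proof, your argument is exactly the expected one: the backward direction is the general fact that retractable modules with prime endomorphism ring are $*$-prime, and the forward direction correctly isolates semi-projectivity as the identity $\Hom{M}{(M)g}=Sg$, which turns $fSg=0$ into $(M)f * (M)g=0$. Both implications check out under the paper's right-operator conventions, so there is nothing to fix.
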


Let $R$ be a ring and $B\subseteq \mathrm{End}_{\ZZ}(R)$ be a subring of the ring of $\ZZ$-linear endomorphisms of $R$ such that all left multiplications $L_a:R\rightarrow R$ defined by $L_a(x)=ax$ for $a,x\in R$ belong to 
$B$. $R$ becomes naturally a left $B$-module by evaluating of functions. The subring $R^B = \{ (1)f \mid f\in B\}$ can be seen to be a generalized subring of invariants of $R$ with respect to $B$. 
It is not difficult to see, that $R^B$ is isomorphic to $\mathrm{End}_{B}(R)$ (see \cite[Lemma 1.8]{Lomp05a}). This general situation mimics the case of $R$ considered as a bimodule or $R$ considered having
a Hopf algebra $H$ acting on it. To ask that $R$ is a semi-projective as $B$-module, is to say that for each $x\in R^B$ one has $R^B x = (Rx)\cap R^B$.

Considering $R$ as a bimodule, we let $B$ to be the subring of $\mathrm{End}_{\ZZ}(R)$ generated by all left and right multiplications of elements of $R$. The $B$-module structure of $R$ is identical with
the bimodule structure of $R$. Then $R$ is semi-projective as $R\otimes R^{op}$-module if for example all non-zero central elements of $R$ are non-zero divisors in $R$. Because
if $x$ is central and $ax$ is central for some $a\in R$, then for any $b\in R$ one has $(ab-ba)x=abx-bax=axb-axb=0$, i.e. $ab=ba$ and $a$ is central. Thus $Rx\cap C(A) = C(A)x$.
In case  $R$ is  $*$-prime as $R\otimes R^{op}$-module, $0\neq x\in C(R)$ and $I=\Ann{x}=\{a\in R \mid ax=0\}$ is its annihilator, the $*$-product of $I$ and $Rx$ is given by:
$$I*(Rx) = I \mathrm{Hom}_{R\otimes R^{op}}(R, Rx) = I( (Rx)\cap C(R)) \subseteq Ix = 0.$$ Since we supposed that $R$ is $*$-prime and $x\neq 0$, we get  $I=0$.
This shows that no non-zero central element of $R$ is a zero-divisor in $R$. Consequentely we can state the following 

\begin{cor}\label{bimodule}
A ring $R$ is a $*$-prime $R\otimes R^{op}$-module if and only if the center of $R$ is an integral domain and large in $R$.
\end{cor}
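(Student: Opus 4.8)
The plan is to reduce both conditions to computations with the Hom-sets that define the $*$-product, exploiting the standard identification of the endomorphism ring of $R$ as an $R\otimes R^{op}$-module with its center. The key observation I would record first is that for every ideal $I$ one has $\mathrm{Hom}_{R\otimes R^{op}}(R,I)\cong I\cap C(R)$: a bimodule map $f:R\to I$ is determined by $c=(1)f$, and the bimodule relations $(rx)f=r(x)f$ and $(xr)f=(x)f\,r$ force $c$ to be central and $(x)f=xc$ for all $x$. Conversely every $c\in I\cap C(R)$ gives such a map. Feeding this into the definition of the $*$-product, I would then record the basic formula $N*L=N\bigl(L\cap C(R)\bigr)$ for any two ideals $N,L$, after which the whole statement becomes a short verification.

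First I would prove the forward direction. Assume $R$ is $*$-prime as an $R\otimes R^{op}$-module. Since every $*$-prime module is retractable, $\mathrm{Hom}_{R\otimes R^{op}}(R,I)\neq 0$ for every non-zero ideal $I$; by the identification above this says precisely $I\cap C(R)\neq 0$, i.e.\ $C(R)$ is large with respect to the lattice of two-sided ideals of $R$. That $C(R)$ is an integral domain follows from the computation already carried out just before the statement: for a non-zero central $x$ the annihilator $I=\Ann{x}$ satisfies $I*(Rx)\subseteq Ix=0$, so primeness forces $I=0$, and hence no non-zero central element is a zero-divisor in $R$. As $C(R)$ is commutative and contains $1\neq 0$, this absence of zero-divisors makes it an integral domain.

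Next I would prove the converse. Suppose $C(R)$ is an integral domain and large in $R$, and let $N,L$ be non-zero ideals. Largeness supplies non-zero central elements $d\in N\cap C(R)$ and $c\in L\cap C(R)$. Their product $dc$ is again central, and it is non-zero because $C(R)$ has no zero-divisors. Since $d\in N$ and $c\in L\cap C(R)$, we get $dc\in N\bigl(L\cap C(R)\bigr)=N*L$, whence $N*L\neq 0$. As $N,L$ were arbitrary non-zero ideals, $R$ is $*$-prime as a bimodule.

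The main thing to be careful about is in the converse, and it is why one must pull a central element out of \emph{each} ideal rather than argue with a single one: under the mere hypothesis that $C(R)$ is a domain, a non-zero central element need not be a non-zero-divisor in $R$, so $Nc$ could in principle be zero for a chosen $0\neq c\in L\cap C(R)$; using the product $dc$ of two non-zero central elements sidesteps this, since domain-ness guarantees $dc\neq 0$. The other delicate point is interpretational, namely that "large" here means essentiality of $C(R)$ against two-sided ideals (equivalently, against $R\otimes R^{op}$-submodules), which is exactly the content of retractability of $R$ as a bimodule; this is what makes the two halves of the equivalence line up.
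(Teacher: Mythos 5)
Your proof is correct, and its forward direction coincides with the paper's: retractability of a $*$-prime module, read through the identification $\mathrm{Hom}_{R\otimes R^{op}}(R,I)\cong I\cap C(R)$, gives largeness of the center, and the computation $I*(Rx)=I\bigl((Rx)\cap C(R)\bigr)\subseteq Ix=0$ for $I=\Ann{x}$ is exactly the one the paper carries out in the paragraph preceding the corollary. Your converse, however, takes a genuinely different route. The paper never verifies $*$-primeness directly: it first observes that $R$ is semi-projective as an $R\otimes R^{op}$-module whenever every non-zero central element is a non-zero-divisor in $R$ --- a property that under the hypotheses must itself be extracted from ``domain and large'' (if $\Ann{x}\neq 0$ for central $x\neq 0$, largeness puts a non-zero central $c$ into $\Ann{x}$, and $cx=0$ contradicts the domain hypothesis; the paper leaves this step tacit) --- and then invokes the Lemma quoted from \cite[Proposition 4.2]{Lomp05b}, that a semi-projective module is $*$-prime if and only if it is retractable with prime endomorphism ring, using $\mathrm{End}_{R\otimes R^{op}}(R)\cong C(R)$. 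You replace all of this machinery with the one-line verification $0\neq dc\in N\bigl(L\cap C(R)\bigr)=N*L$ for $0\neq d\in N\cap C(R)$ and $0\neq c\in L\cap C(R)$, and your two-element trick is precisely what makes the argument self-contained: it uses only that $C(R)$ has no zero-divisors and never needs central elements to be regular in all of $R$. Your cautionary remark justifying the trick is also sound --- a domain center alone does not force regularity, as one sees in the upper triangular ring with diagonal entries from $k[t]$ and $k$ and corner bimodule $k[t]/(t)$, whose center is isomorphic to $k[t]$ while the central element corresponding to $t$ annihilates the corner ideal. What the paper's route buys is conceptual placement: the corollary appears as an instance of the semi-projectivity lemma organizing that subsection, and it is later re-derived from Theorem \ref{prime-semiprime} via the corollary on commutative endomorphism rings. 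What your route buys is economy: an elementary proof from the formula $N*L=N\bigl(L\cap C(R)\bigr)$, independent of the cited external results.
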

Here we say that a subring $R'$ of $R$ is large in $R$ if any non-zero ideal of $R$ contains a non-zero element of $R'$.

Let $G$ be a group acting on $R$. It is known that  $R$ is a projective $R*G$-module if and only if  $G$ is a finite group and $|G|1$ is invertible in $R$. 
Thus in this case $R$ is a $*$-prime $R*G$-module  if and only if $R^G$ is a prime ring.

If $R$ is an algebra over a field $F$ and $\partial$ is a locally nilpotent derivation of $R$  and either $\mathrm{char}(F)=0$ or $\partial^{\mathrm{char}(F)} = 0$, then 
$R$ is self-projective as left $R[x,\partial]$-module by \cite[Proposition 3.10]{BorgesLomp11b}.  Hence in this situation (using also \cite[Lemma 3.8]{BorgesLomp11b}) $R$ is a $*$-prime left $R[x,\partial]$-module if and only if $R^\partial$ is a
prime ring.

\section{Prime endomorphism rings}

The purpose of this section is to gather conditions for a $*$-prime module to have a prime endomorphism ring.
Denote by $\lann{S}{I}$ (resp. by $\rann{S}{I}$) the left (resp. right) annihilator in $S$ of an ideal $I$.

\begin{thm}\label{prime-semiprime}
The following statements are equivalent for a $*$-prime module $M$ with endomorphism ring $S$:
\begin{enumerate}
 \item[(a)] $S$ is prime.
 \item[(b)] $S$ is semiprime. 
 \item[(c)] $\lann{S}{I}\subseteq \rann{S}{I}$ holds for any ideal $I$ of $S$.
 \item[(d)] $gSf=0 \Rightarrow fSg=0$ for all $f,g \in S$.
\end{enumerate}
\end{thm}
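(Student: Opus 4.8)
The plan is to prove the cycle of implications $(a)\Rightarrow(b)\Rightarrow(c)\Rightarrow(d)\Rightarrow(a)$, arranged so that the first three steps hold for an arbitrary ring and only the last one uses that $M$ is $*$-prime. The implication $(a)\Rightarrow(b)$ is immediate, since every prime ring is semiprime. For $(b)\Rightarrow(c)$ I would use the element-wise form of semiprimeness (namely $aSa=0$ forces $a=0$): given an ideal $I$ and $x\in\lann{S}{I}$, for each $a\in I$ one has $xSa\subseteq xI=0$ because $Sa\subseteq I$, so $(ax)S(ax)=a(xSa)x=0$ and hence $ax=0$; as $a$ ranges over $I$ this gives $Ix=0$, i.e. $x\in\rann{S}{I}$. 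For $(c)\Rightarrow(d)$, assume $gSf=0$ and set $I=SfS$; then $gI=(gSf)S=0$, so $g\in\lann{S}{I}$, whence by (c) $g\in\rann{S}{I}$, i.e. $Ig=0$, and since $f\in SfS=I$ we get $fSg\subseteq Ig=0$.

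The heart of the theorem is $(d)\Rightarrow(a)$, and here I would argue by contradiction using the characterization of primeness recalled from \cite[1.3]{Lomp05b}. If $S$ is not prime, there is a non-zero fully invariant, $M$-generated submodule $K$ with $\Hom{M/K}{M}\neq 0$; that is, there is $0\neq g\in S$ with $(K)g=0$. I would set $I=\Hom{M}{K}$, which is a non-zero ideal of $S$ (non-zero because $M$ is retractable and $K\neq 0$, and an ideal because $K$ is fully invariant), and note that $(M)I=K$ since $K$ is $M$-generated. From $(K)g=(M)Ig=0$ one reads off $Ig=0$, so $J:=\rann{S}{I}$ is non-zero.

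The decisive step is to convert condition (d) into the annihilator inclusion $\rann{S}{I}\subseteq\lann{S}{I}$: for $\psi\in\rann{S}{I}$ and any $\phi\in I$ one has $\phi S\psi\subseteq I\psi=0$ (because $\phi S\subseteq I$), so (d) yields $\psi S\phi=0$ and in particular $\psi\phi=0$; since $\phi\in I$ was arbitrary, $\psi I=0$. Hence $JI=0$. Feeding this back into the $*$-product, and using crucially that $\Hom{M}{K}=I$, I obtain
\[ (M)J * K \;=\; (M)J\,\Hom{M}{K} \;=\; \textstyle\sum\{(M)\psi\phi : \psi\in J,\ \phi\in I\} \;=\; 0, \]
because every $\psi\phi$ lies in $JI=0$. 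But $(M)J\neq 0$ (as $0\neq g\in J$ forces $(M)g\neq 0$) and $K\neq 0$, contradicting the $*$-primeness of $M$. Therefore $S$ is prime, closing the cycle.

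The main obstacle is precisely this last step: one must resist forming $*$-products of the shape $(M)f * (M)g$ out of images of single endomorphisms, because $\Hom{M}{(M)g}$ cannot be pinned down without a projectivity hypothesis (this is exactly why the general question remains open). The trick that makes the argument go through is to place the trace submodule $K=(M)I$ in the \emph{second} slot of the $*$-product, so that $\Hom{M}{K}$ is tautologically the very ideal $I$ one started with, and then to use (d) to produce the orthogonality $JI=0$ that $*$-primeness forbids. I expect the only points needing care to be the identifications $(M)I=K$ and $\rann{S}{I}=\{\,\psi:(K)\psi=0\,\}$, both of which rely on $K$ being fully invariant and $M$-generated.
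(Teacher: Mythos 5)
Your proof is correct, but it takes a genuinely different route in its organization and in the decisive step. The paper proves $(a)\Rightarrow(b)\Rightarrow(c)\Rightarrow(a)$ and treats $(d)$ separately as a purely ring-theoretic reformulation of $(c)$ (using $\lann{S}{I}=\bigcap_{f\in I}\lann{S}{SfS}$ and its right-hand analogue), so all the module-theoretic content sits in $(c)\Rightarrow(a)$: from $IJ=0$ it forms the trace $MI$, notes $M\Hom{M}{MI}J\subseteq MIJ=0$, flips with $(c)$ to get $J\Hom{M}{MI}=0$, and concludes $(MJ)*(MI)=0$, contradicting $*$-primeness. You instead close the four-step cycle $(a)\Rightarrow(b)\Rightarrow(c)\Rightarrow(d)\Rightarrow(a)$ and put the module work in $(d)\Rightarrow(a)$, argued by contradiction through the characterization \cite[1.3]{Lomp05b}: a non-prime $S$ yields a non-zero fully invariant $M$-generated $K$ killed by some $0\neq g\in S$, and with $I=\Hom{M}{K}$, $J=\rann{S}{I}$ you use $(d)$ to get $JI=0$ and hence $(M)J*K=0$. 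Both arguments hinge on the identical mechanism, which you correctly single out: put a trace submodule in the second slot of the $*$-product so that the relevant Hom is under control (for you $(M)I=K$ with $\Hom{M}{K}=I$ exactly; for the paper $MI$ with $M\Hom{M}{MI}\subseteq MI$), then use the annihilator flip to force the $*$-product to vanish. What your route buys: the ring-theoretic steps are fully self-contained (you prove $(b)\Rightarrow(c)$ elementwise and $(c)\Rightarrow(d)$ directly, where the paper quotes the semiprime annihilator fact and leaves $(d)\Rightarrow(c)$ to the reader), and you exhibit that condition $(d)$ alone drives the module step, without first recovering $(c)$ in full. What it costs: a dependency on \cite[1.3]{Lomp05b} that the paper's own proof avoids --- though you only need its easy direction, and unwinding it (take $K=MA$ for ideals $A,B$ of $S$ with $AB=0$ and pick $0\neq g\in B$, so $(K)g=(M)Ag=0$) essentially reproduces the paper's direct two-ideal computation; you could streamline your $(d)\Rightarrow(a)$ by doing exactly that and dropping the citation. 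All the delicate identifications you flag ($(M)I=K$ from $M$-generation, $I$ an ideal from full invariance, $I\neq 0$ from retractability of $*$-prime modules) are handled correctly.
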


\begin{proof}
 $(a)\Rightarrow (b) \Rightarrow (c)$ is trivial since the left and right annihilator of an ideal coincide in a semiprime ring.
$(c)\Rightarrow (a)$ Suppose that $IJ=0$ for two ideals $I,J$ of $S$. 
Then $M\Hom{M}{MI}J\subseteq MIJ=0$ implies $\Hom{M}{MI}J=0$.
By (c) $J\Hom{M}{MI}=0$.  Hence $ (MJ)*(MI) = MfS\Hom{M}{MI}=0$ and since $M$ is $*$-prime, we have $MI=0$ or $MJ=0$, i.e. $I=0$ or $J=0$. Thus  $S$ is prime.

Condition $(d)$ is equivalent to saying that 
$$\lann{S}{SfS}=\lann{S}{Sf} \subseteq \rann{S}{fS}=\rann{S}{SfS}$$ for all $f\in S$, which is a consequence of $(c)$.
On the other hand, assuming $(d)$ condition $(c)$ follows since for any non-zero ideal $I$ we have 
$\lann{S}{I}=\bigcap_{f\in I} \lann{S}{SfS}$  and the analogous statement for $\rann{S}{I}$.
\end{proof}

Note that $(c)\Rightarrow (a)$ needed only the primness condition for fully invariant submodules. These modules had been investigated by R.Wisbauer and I. Wijayanti and termed \emph{fully prime} modules.

\subsection{}

We deduce two corollaries from the last theorem:

\begin{cor}
 Let $M$ be a left $R$-module with endomorphism ring $S$. Then $S$ is prime and $M$ is rectractable if and only if $M$ is $*$-prime and 
 $gSf=0$ implies $fSg=0$ for all $f,g \in S$.
\end{cor}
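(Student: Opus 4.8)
The plan is to derive this Corollary as a direct application of the equivalence $(a)\Leftrightarrow(d)$ in Theorem~\ref{prime-semiprime}. The statement is a biconditional, so I would prove the two implications separately, but both directions should reduce to bookkeeping once I have the Theorem in hand.

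For the forward direction, assume $S$ is prime and $M$ is retractable. The condition ``$gSf=0 \Rightarrow fSg=0$'' is exactly condition $(d)$, and since $S$ is prime it is in particular semiprime; in a semiprime ring the left and right annihilators of an ideal coincide, which yields $(c)$ and hence $(d)$ directly. So the only real content here is to produce the $*$-prime conclusion. For this I would invoke the fact, recorded in the subsection on retractable modules, that a retractable module with prime endomorphism ring is $*$-prime (cf.\ \cite[Theorem 4.1]{Lomp05b}). Thus retractability plus primeness of $S$ gives both that $M$ is $*$-prime and that $(d)$ holds.

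For the reverse direction, assume $M$ is $*$-prime and that $(d)$ holds. Then Theorem~\ref{prime-semiprime}, specifically $(d)\Rightarrow(a)$, immediately gives that $S$ is prime. It remains to observe that $M$ is retractable: but every $*$-prime module is retractable, as stated at the beginning of the retractable-modules subsection. Hence $S$ is prime and $M$ is retractable, completing this direction.

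The main thing to be careful about is that the Corollary is essentially just repackaging the Theorem together with two already-known facts (``$*$-prime $\Rightarrow$ retractable'' and ``retractable $+$ prime $S$ $\Rightarrow$ $*$-prime''), so the only genuine obstacle is making sure the hypotheses line up cleanly: Theorem~\ref{prime-semiprime} is stated \emph{for $*$-prime modules}, so in the forward direction I cannot invoke it until I have already secured $*$-primeness from retractability and the primeness of $S$. Once that ordering is respected, the argument is short and the equivalences fall out immediately.
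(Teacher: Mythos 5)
Your proof is correct and follows exactly the route the paper intends: the corollary is stated as an immediate consequence of Theorem~\ref{prime-semiprime} combined with the two facts that $*$-prime modules are retractable and that retractable modules with prime endomorphism ring are $*$-prime (\cite[Theorem 4.1]{Lomp05b}). Your observation about the order of quantification --- securing $*$-primeness before invoking the theorem in the forward direction, or alternatively noting that $(a)\Rightarrow(d)$ is pure ring theory about $S$ and needs no module hypothesis --- is exactly the right care to take.
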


As a particular case we recover the characterisation of $R$ being  $*$-prime as bimodule (see \ref{bimodule}):
\begin{cor}
 Let $M$ be a left $R$-module with commutative endomorphism ring $S$.
Then $M$ is $*$-prime if and only if $M$ is retractable and $S$ is an integral domain.
\end{cor}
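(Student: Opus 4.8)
The plan is to reduce the statement to Theorem \ref{prime-semiprime} together with the preceding corollary by exploiting the commutativity of $S$. First I would observe that when $S$ is commutative the implication $gSf=0\Rightarrow fSg=0$ of condition (d) holds trivially: for every $s\in S$ one has $gsf=fsg$, so the subsets $gSf$ and $fSg$ of $S$ literally coincide. Hence condition (d) of Theorem \ref{prime-semiprime} is automatically satisfied for a commutative $S$ and imposes no restriction at all.

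Next I would translate primeness of $S$ into the integral-domain condition. For a commutative ring $S$ with identity, $aSb=0$ is equivalent to $ab=0$: one direction follows by taking $s=1$, and the other from $asb=abs=0$ using commutativity. Consequently $S$ is a prime ring precisely when $ab=0$ forces $a=0$ or $b=0$, that is, precisely when $S$ is an integral domain. Here I would note that a $*$-prime module $M$ is non-zero, so $S=\End{M}$ is a non-zero ring and the phrase \emph{integral domain} is meaningful.

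With these two elementary reformulations the corollary falls out of the preceding corollary, which asserts that $S$ is prime and $M$ is retractable if and only if $M$ is $*$-prime and the condition $gSf=0\Rightarrow fSg=0$ holds for all $f,g\in S$. Since that last condition is vacuous in the commutative case, the right-hand side collapses to ``$M$ is $*$-prime'', while the left-hand side becomes ``$S$ is an integral domain and $M$ is retractable'' via the equivalence of primeness and the domain property established above. This is exactly the claimed equivalence, and it recovers Corollary \ref{bimodule} when $M=R$ is viewed as a bimodule.

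I do not anticipate any genuine obstacle here, since the argument is essentially a bookkeeping exercise on top of the earlier results. The only points requiring care are the two reformulations themselves, namely the vacuity of condition (d) for commutative $S$ and the identity $aSb=0\iff ab=0$ that upgrades primeness to the absence of zero divisors, together with the observation that $M\neq 0$ keeps $S$ non-zero.
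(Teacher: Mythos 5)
Your proposal is correct and follows exactly the route the paper intends: the corollary is presented as an immediate specialisation of the preceding corollary (itself a consequence of Theorem \ref{prime-semiprime}), with commutativity of $S$ making condition (d) vacuous and reducing primeness of $S$ to the integral-domain property. Your two auxiliary observations --- that $gSf=fSg$ for commutative $S$ and that $aSb=0\iff ab=0$ in a unital commutative ring, together with the remark that a $*$-prime module is non-zero so $S\neq 0$ --- are precisely the bookkeeping the paper leaves implicit.
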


Since semprime PI-rings or central Azumaya rings have large center, we see that any such ring is a $*$-prime bimodule if and only if its center is a domain.

\subsection{}

The next result generalizes the fact that semi-projective $*$-prime modules have prime endomorphism.

\begin{prop}
 Assume that for any non-zero ideal $J$ of $S$ which is essential as left and right ideal 
 there exists a non-zero submodule $N$ of $M$ such that $\Hom{M}{N}\subseteq J$. Then $S$ is prime if $M$ is $*$-prime.
\end{prop}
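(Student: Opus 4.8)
The plan is to combine Theorem~\ref{prime-semiprime} with a single application of the hypothesis. By that theorem it suffices to prove that $S$ is semiprime, so I argue by contradiction and assume there is a nonzero ideal $I$ of $S$ with $I^2=0$. Since the elements of $I$ are endomorphisms of $M$ and $I\neq 0$, the submodule $MI=\sum_{f\in I}(M)f$ is nonzero. The whole argument is then aimed at producing a \emph{nonzero} submodule $N$ with $\Hom{M}{N}\subseteq\rann{S}{I}$: for such an $N$ every $f\in\Hom{M}{N}$ satisfies $If=0$, so $(MI)f=0$ for every $f\in\Hom{M}{N}$ and hence $(MI)*N=(MI)\Hom{M}{N}=0$ with $MI\neq 0$ and $N\neq 0$, contradicting the $*$-primeness of $M$. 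In other words, I want to feed the hypothesis an essential two-sided ideal inside which no nonzero $\Hom{M}{N}$ can live.

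To obtain $N$ I must check that $\rann{S}{I}$ is essential both as a left and as a right ideal and then apply the hypothesis to $J=\rann{S}{I}$. The left side is automatic: for any two-sided ideal $A$ the left ideal $A+\rann{S}{A}$ is essential as a left ideal, because a left ideal $L$ with $L\cap(A+\rann{S}{A})=0$ satisfies $AL\subseteq A\cap L=0$, whence $L\subseteq\rann{S}{A}$ and so $L=0$. Since $I^2=0$ gives $I\subseteq\rann{S}{I}$, taking $A=I$ yields $\rann{S}{I}=I+\rann{S}{I}$, which is therefore essential as a left ideal.

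The step I expect to be the real obstacle is the essentiality of $\rann{S}{I}$ as a \emph{right} ideal. The symmetric fact only shows $\lann{S}{I}+\rann{S}{I}$ to be right-essential, and one cannot afford to enlarge $J$ by $\lann{S}{I}$: from $(MI)I=MI^2=0$ one gets $\Hom{M}{MI}\subseteq\lann{S}{I}$, so $(MI)*(MI)\subseteq (MI)\lann{S}{I}$, and $*$-primeness \emph{forces} $(MI)\lann{S}{I}\neq 0$; thus the cross term reintroduced by $\lann{S}{I}$ is genuinely nonzero and destroys the intended contradiction. I would instead try to prove right-essentiality directly, exploiting that $M$ is a right $S$-module (homomorphisms act on the right) and that $\rann{S}{I}=\{s\in S:(MI)s=0\}$ is exactly the annihilator of the right $S$-module $MI$. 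The natural attempt is to take a nonzero right ideal $\rho$ with $\rho\cap\rann{S}{I}=0$, observe that $\rho$ then acts without kernel on $MI$, and use retractability of $M$ to trace this back to a nonzero submodule whose homomorphisms into $M$ land in $\rann{S}{I}$ — precisely the impossible configuration isolated in the first paragraph. Turning this last heuristic into a valid deduction, and thereby ruling out such a complement $\rho$, is where the difficulty of the proposition is concentrated.
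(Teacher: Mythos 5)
Your skeleton is exactly the paper's: reduce to semiprimeness via Theorem~\ref{prime-semiprime}, note $MI\neq 0$ for a nonzero ideal $I$ with $I^2=0$, feed the hypothesis an essential two-sided ideal contained in $\rann{S}{I}$, and contradict $*$-primeness via $(MI)*N=(MI)\Hom{M}{N}\subseteq M(I\,\rann{S}{I})=0$. Your left-essentiality lemma is also correct, as is your computation that $\Hom{M}{MI}\subseteq\lann{S}{I}$ forces $(MI)\lann{S}{I}\neq 0$, so that one cannot pass to the sum $\lann{S}{I}+\rann{S}{I}$. But the proposal stops short of a proof: your final paragraph is, as you say yourself, a heuristic, and you never actually produce a nonzero two-sided ideal, essential on both sides, on which the hypothesis can be invoked. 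That is a genuine gap, and it is the whole content of the proposition beyond the routine frame.

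The paper's device, which you do not consider, is to \emph{intersect} rather than sum: it takes $J=\rann{S}{I}\cap\lann{S}{I}$. This $J$ is a two-sided ideal, it contains $I$ (since $I^2=0$ puts $I$ in both annihilators, not just in $\rann{S}{I}$ as you noted), hence $J\neq 0$ when $I\neq 0$; and because $J\subseteq\rann{S}{I}$ one still has $IJ=0$, so $MI\Hom{M}{N}\subseteq MIJ=0$ and the contradiction goes through untouched --- the cross term $(MI)\lann{S}{I}$ that you correctly identified as fatal simply never enters. The paper then asserts that this $J$ is essential as a left and as a right ideal and applies the hypothesis to it. You should be aware, however, that the paper gives no argument for this essentiality, and it is \emph{not} a formal consequence of $I^2=0$ alone: in the upper triangular matrix ring $T_2(k)$ with $I=kE_{12}$ one computes $J=\lann{S}{I}\cap\rann{S}{I}=kE_{12}$, which meets the left ideal $kE_{11}$ and the right ideal $kE_{22}$ trivially. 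Your lemma and its mirror image only show that a left (resp.\ right) ideal meeting $J$ trivially must lie inside $\rann{S}{I}$ (resp.\ $\lann{S}{I}$), which does not finish the job; so some use of the module setting ($*$-primeness or retractability of $M$) is needed here, and the paper leaves it implicit. In short: your diagnosis of where the difficulty is concentrated is accurate, but relative to the paper the concrete missing idea is the intersection $\rann{S}{I}\cap\lann{S}{I}$, and even then the two-sided essentiality of that intersection is a point the paper asserts rather than proves.
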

\begin{proof}
 Let $I^2=0$ for an ideal in $S$. 
 Then $J=\rann{S}{I}\cap\lann{S}{I}$ is a non-zero ideal of $S$ which is essential on both sides. 
 By assumption $\Hom{M}{N}\subseteq J$ for some non-zero submodule $N$ of $M$. 
 Thus $MI*N = MI\Hom{M}{N} \subseteq MIJ=0$ and as $M$ is $*$-prime and $N$ non-zero we have $I=0$, i.e. $S$ is semiprime and by Theorem \ref{prime-semiprime} $S$ is prime.
\end{proof}

\subsection{}

A left $R$-module $M$ is called \emph{torsionless} if it is cogenerated by $R$. 
A result by Amitsur says that any faithful torsionless module over a prime ring has a prime endomorphism ring (see \cite[Corollary 2.8]{Amitsur71}). 
The following Proposition gives sufficient conditions for a $*$-prime module $M$ to be torsionless.

\begin{prop}\label{amitsor}
 Let $M$ be a faithful left $R$-module over a prime ring $R$.
 In any of the following cases $M$ is torsionless and hence has a prime endomorphism ring.
\begin{enumerate}
 \item $M$ is a $*$-prime module and is not a singular left $R$-module.
 \item $M$ is a $*$-prime module and $R$ is a left duo ring, i.e. any left ideal is twosided.
 \item $M$ is non-singular and is cogenerated by all of its essential submodules.
\end{enumerate}
\end{prop}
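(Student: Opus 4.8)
The plan is to reduce all three cases to Amitsur's theorem \cite[Corollary 2.8]{Amitsur71}: as soon as $M$ is shown to be torsionless, its faithfulness together with the primeness of $R$ yields that $S$ is prime. So the entire task is to prove that $M$ is cogenerated by $R$, equivalently that the reject $T=\bigcap_{f\in M^*}\Ker{f}$ is zero, where $M^*=\Hom{M}{R}$. The engine for parts (1) and (2) will be the following observation, which promotes a single ``torsionless piece'' to global torsionlessness by means of the $*$-prime hypothesis. Suppose $U$ is a nonzero submodule of $M$ admitting a monomorphism $\iota\colon U\to R$ (for instance, $U$ isomorphic to a left ideal of $R$). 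For every $g\in\Hom{M}{U}$ the composite $g\iota$ lies in $M^*$ and therefore kills $T$; since $\iota$ is injective this forces $(T)g=0$. Summing over $g$ gives $T*U=T\Hom{M}{U}=0$, and as $M$ is $*$-prime with $U\neq 0$ we conclude $T=0$. Thus for (1) and (2) it suffices to exhibit one nonzero submodule of $M$ that embeds into $R$.

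For part (1) such a submodule is immediate: since $M$ is not singular there is $0\neq m\in M$ with $\Ann{m}$ non-essential, so some nonzero left ideal $I$ satisfies $I\cap\Ann{m}=0$, and then $r\mapsto rm$ embeds $I$ onto $U=Im\subseteq M$, giving $U\cong I\hookrightarrow R$. For part (2) I would first show that $M$ is necessarily \emph{not} singular. If some $0\neq m$ had $A=\Ann{m}\neq 0$, then $A$ is a two-sided ideal because $R$ is left duo, so $\rann{M}{A}=\{x\in M:\ Ax=0\}$ is a submodule; it is nonzero since it contains $m$, and $AM\neq 0$ by faithfulness. For every $g\colon M\to\rann{M}{A}$ one has $(AM)g\subseteq A\cdot\rann{M}{A}=0$, whence $AM*\rann{M}{A}=0$ with both factors nonzero, contradicting $*$-primeness. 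Hence $\Ann{m}=0$ for all $0\neq m$, so $M$ is not singular and case (1) applies.

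Part (3) is independent of the $*$-prime hypothesis and proceeds by building an essential torsionless submodule. Using Zorn's lemma I would choose a maximal independent family $\{N_\lambda\}$ of nonzero submodules of $M$, each isomorphic to a nonzero left ideal of $R$, and set $N=\bigoplus_\lambda N_\lambda$. A direct sum of torsionless modules is torsionless, so $N$ is cogenerated by $R$. The family's maximality makes $N$ essential: a nonzero complement $K$ would be non-singular as a submodule of $M$, so any $0\neq k\in K$ has $\Ann{k}$ non-essential and produces a copy of a left ideal inside $K$ disjoint from $N$, contradicting maximality. Now the hypothesis that $M$ is cogenerated by each of its essential submodules yields $M\hookrightarrow N^{A}$ for some set $A$, and composing with an embedding $N\hookrightarrow R^{B}$ exhibits $M$ as a submodule of a power of $R$, so $M$ is torsionless.

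In all three cases Amitsur's theorem then applies to the faithful torsionless module $M$ over the prime ring $R$, giving that $S$ is prime. The step I expect to be the main obstacle is the duo argument in (2): its whole force is that left-duoness makes $\rann{M}{A}$ an honest submodule, so that the vanishing $*$-product $AM*\rann{M}{A}=0$ can be played against $*$-primeness. Without the two-sidedness of $\Ann{m}$ this submodule, and with it the contradiction, disappears, which is precisely why the hypothesis is needed to force the non-singularity that then feeds case (1).
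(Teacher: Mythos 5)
Your proposal is correct, and in parts (1) and (2) it is in substance the paper's own argument: the paper too produces a nonzero left ideal embedded in $M$ by taking $0\neq m$ with $\lann{R}{m}$ non-essential and using a complement, and then invokes the fact — which is exactly your reject computation $T*U=T\Hom{M}{U}=0$ — that a $*$-prime module is cogenerated by each of its nonzero submodules; in (2) the paper states the conclusion positively (every submodule of a faithful $*$-prime module is faithful, and left-duoness makes $I=\lann{R}{m}$ two-sided with $I\subseteq\Ann{Rm}=0$, so in fact every nonzero element has zero annihilator), but the mechanism — $A$ two-sided, $A$ annihilates a nonzero submodule, $*$-primeness plus faithfulness force $A=0$ — is identical to your contradiction via $AM*\rann{M}{A}=0$. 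Where you genuinely diverge is part (3). The paper takes a maximal direct sum $\bigoplus_i C_i$ of non-singular cyclic submodules $C_i=Rm_i$ and then, inside each $C_i$, manufactures a submodule $K_im_i\simeq B_i$ (with $B_i$ a complement of $\lann{R}{m_i}$ and $K_i=\lann{R}{m_i}\oplus B_i$), which requires a separate verification that $K_im_i$ is essential in $C_i$, carried out via an essential left ideal $E$ with $Ea=Ra\cap K_i$ and nonsingularity giving $Eam_i\neq 0$; only after that does $N=\bigoplus_i K_im_i$ come out essential in $M$. You instead run Zorn directly on independent families of submodules isomorphic to nonzero left ideals and obtain essentiality of $N=\bigoplus_\lambda N_\lambda$ for free from maximality, since any complement of $N$ inherits nonsingularity and so contains a further copy of a left ideal. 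Your construction is shorter and sidesteps the within-cyclic essentiality computation entirely; the paper's version yields the extra structural fact that every non-singular cyclic contains an essential copy of a left ideal, but that fact is not needed for the torsionless conclusion, and your transitivity step $M\hookrightarrow N^{A}$ with $N$ torsionless is the standard argument the paper leaves implicit. In all three cases the reduction to Amitsur's theorem matches the paper exactly.
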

\begin{proof}
Note that any non-zero submodule $N$ of $M$ that is not singular contains a submodule which is isomorphic to a non-zero left ideal of $R$. To see this let $0\neq x \in N$ be an element whose annihilator 
$A=\lann{R}{x}$ is not essential in $R$. Let $B$ be a complement of $A$, i.e. a left ideal of $R$ which is maximal with respect to $A\cap B=0$. Then $I=A\oplus B$ is an essential left ideal of $R$
and $Ix\neq 0$ since $B$ is non-zero. As $B\simeq Ix$, we see that $B$ is isomorphic to a submodule of $M$.

(1) As explained above, if $M$ is not singular, then there exists a non-zero left ideal $B$ of $R$ which is isomorphic to a submodule of $M$.
Since $M$ is cogenerated by any of its non-zero submodules, it is cogenerated by $B$ and hence by $R$ as $B\subseteq R$. Thus $M$ is torsionless

(2) Since $M$ is a (faithful) prime module, every submodule is also faithful. 
By hypothesis $I=\lann{R}{m}$ is two sided for any element $m$ of $R$ and hence $0=\Ann{Rm}=\Ann{R/I}=I$, 
i.e. $M$ is not singular and the result follows from (1).

(3) Let $M$ be any non-zero nonsingular module that cogenerated by every essential submodule of itself. By Zorn's Lemma
there exist a maximal direct sum $\bigoplus_I C_i$ of cyclic modules $C_i=Rm_i$ non of which is singular. 
Let $A_i=\lann{R}{m_i}$ for each $i\in I$. Since $A_i$ is not essential in $R$, there exists a non-zero complement $B_i$ of $A_i$ in $R$ 
such that $K_i=A_i\oplus B_i$ is an essential left ideal of $R$. Let $a$ be any element in $R$ such that $a\not\in A_i$. 
Then there exists an essential left ideal $E$ of $R$ such that $Ea = Ra \cap K_i$. Because $M$ is nonsingular, we have 
$ 0\neq Eam_i\subseteq K_im_i \cap Ram_i$. Thus $K_im_i$ is an essential submodule of $C_i$ and moreover $K_i m_i \simeq B_i$.
Hence $N=\bigoplus_{i\in I} K_im_i$ is essential in $M$ and by hypothesis cogenerates $M$. Since $N\simeq \bigoplus_{i\in I} B_i \subseteq R^{(I)}$, $M$ is torsionless.
 \end{proof}

\subsection{}
The Wisbauer category of a module $M$ is the full subcategory of $R$-Mod consisting of submodules of quotients of direct sums of copies of $M$. For $M=R$, we have $\sigma[R]=R$-Mod. A module $N\in \sigma[M]$ is called $M$-singular if there are modules $K,L\in\sigma[M]$  with $K$ being an essential submodule of $L$ and $N\simeq L/K$. 
For $M=R$, $R$-singular modules are called singular.
A module $M$ is called {\it polyform} or {\it non-$M$-singular} if it does not contain any $M$-singular submodule or equivalently if $\Hom{L/K}{M}=0$ for 
all essential submodules $K\subseteq L \subseteq M$ (see \cite{wisbauer96}).

\begin{prop}\label{polyform}
 The endomorphism ring  of a $*$-prime polyform module is a prime ring.
\end{prop}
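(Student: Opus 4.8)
The plan is to prove semiprimeness of $S$ and invoke Theorem~\ref{prime-semiprime} to upgrade it to primeness. By that theorem it is enough to show $S$ is semiprime, and I will verify the elementary criterion that $fSf=0$ forces $f=0$ for every $f\in S$ (this is equivalent to semiprimeness, since otherwise $(Sf)^2=S(fSf)=0$ would exhibit a nilpotent left ideal $Sf\ni f$). So I fix $f\in S$ with $fSf=0$ and aim to conclude $f=0$.

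The only use of the polyform hypothesis will be the following observation, which I would isolate first: an endomorphism with essential kernel is zero. Indeed, $f$ induces an injection $M/\Ker{f}\hookrightarrow M$, and if $\Ker{f}$ is essential in $M$ then, taking $K=\Ker{f}\subseteq L=M$ in the definition of polyform, we get $\Hom{M/\Ker{f}}{M}=0$; hence the induced map is zero and so is $f$. Everything therefore reduces to showing that $fSf=0$ makes $\Ker{f}$ essential in $M$.

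For the essentiality I would argue via $*$-primeness. Let $U\subseteq M$ be a submodule with $U\cap\Ker{f}=0$; the goal is $U=0$. Any $h\in\Hom{M}{U}$, viewed inside $S$ through the inclusion $U\subseteq M$, satisfies $fhf=0$, which unwinds to $(\Im{f})h\subseteq\Ker{f}$; but $(\Im{f})h\subseteq U$ as well, so $(\Im{f})h\subseteq U\cap\Ker{f}=0$. Letting $h$ range over all of $\Hom{M}{U}$ gives $\Im{f}*U=(\Im{f})\Hom{M}{U}=0$. Since $M$ is $*$-prime, a vanishing $*$-product forces one factor to be zero; as $\Im{f}\neq0$ (we may assume $f\neq0$), we obtain $U=0$. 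Thus $\Ker{f}$ is essential, the isolated observation yields $f=0$, and $S$ is semiprime, hence prime.

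The main point is structural rather than computational: the trick is to feed the entire Hom-set $\Hom{M}{U}$---for $U$ a would-be complement of $\Ker{f}$---into the relation $fSf=0$, so that $*$-primeness turns ``$\Ker{f}$ is not essential'' into a contradiction. The role of polyformity is then precisely, and only, to convert the resulting essential kernel into the vanishing of $f$. This clean division of labour between $*$-primeness (which forces essentiality) and polyformity (which forces kernel $\Rightarrow$ zero) is the crux I would want to get right.
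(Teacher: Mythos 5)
Your proof is correct, and it follows the paper's overall skeleton --- establish that $S$ is semiprime and upgrade to prime via Theorem \ref{prime-semiprime}, with polyformity invoked exactly once to kill a map factoring through the quotient by an essential submodule --- but the semiprimeness step is implemented genuinely differently. The paper works at the level of ideals: given $I^2=0$, it observes that $MI$ is fully invariant, invokes the general fact that any non-zero fully invariant submodule $N$ of a $*$-prime module is essential (since $0\neq N*L = N\Hom{M}{L}\subseteq N\cap L$ for non-zero $L$), and then factors each $f\in I$ through $M/MI$, which works because $(MI)f\subseteq MI^2=0$; polyformity gives $\Hom{M/MI}{M}=0$, hence $I=0$. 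You instead work elementwise with the criterion $fSf=0\Rightarrow f=0$ (correctly justified via the nilpotent left ideal $Sf$) and prove that $\Ker{f}$ itself is essential, by feeding all of $\Hom{M}{U}$, for $U$ a would-be complement of $\Ker{f}$, into the relation $fSf=0$ to obtain $(\Im{f})*U=0$ and hence $U=0$. So the two arguments use $*$-primeness through different mechanisms: the paper through the reusable ``fully invariant $\Rightarrow$ essential'' lemma (which it also needs elsewhere, e.g. in the socle discussion), you through a bespoke vanishing $*$-product; in effect the paper certifies essentiality of $\Ker{f}$ indirectly via $MI\subseteq\Ker{f}$ with $MI$ essential, while you certify it directly. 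Your version is self-contained and isolates cleanly where each hypothesis enters; the paper's version disposes of a whole ideal at once and recycles a lemma already in its toolkit. Both are sound.
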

\begin{proof}
Recall our general hypothesis that $M$ is a faithful left module over a prime ring $R$. Let $I^2=0$ for some ideal $I$ of $S$. Then $MI$ is fully invariant.
Note that any fully invariant submodule $N$ of $M$ is essential as  $M$ is $*$-prime, because 
 for any non-zero submodule $L$ of $M$  we have that  $0\neq N*L=N\Hom{M}{L}\subseteq N\cap L$. Thus $MI$ is essential in $M$.
 Denote by $\pi:M\rightarrow M/MI$  the canonical projection, then $\pi I\subseteq \Hom{M/MI}{M}=0$ as $M$ is polyform.
 Thus $I=0$ and $S$ is semiprime. By Theorem \ref{prime-semiprime} $S$ is prime.
\end{proof}

\section{Simple submodules in weakly compressible modules}
The purpose of this last section is to see what can be said about the endomorphism ring of a $*$-prime module with non-zero socle.
 It is also clear that if a $*$-prime module contains a simple submodule $S$, then any simple submodule of $M$ must be isomorphic to $S$. 
Moreover since $\Soc{M}$, the socle of $M$, is fully invariant, we have for any submodule $L$ of $M$: $(\Soc{M}*L=N\Hom{M}{L}\subseteq \Soc{M}\cap L$. Thus a $*$-prime module $M$ has
either zero socle or has an essential and homogeneous semisimple socle, i.e. isomorphic to a direct sum of copies of a simple module.

A submodule $N$ is called semiprime if for any $K \subseteq M: K*K\subseteq N \Rightarrow K\subseteq N$. A module whose zero submodule is semiprime is called 
{\it weakly compressible} by Zelmanowitz (see \cite{Zelmanowitz95}). Obviously $*$-prime modules are weakly compressible.

\begin{lem}
Any simple submodule of a weakly compressible module $M$ is a direct summand.
\end{lem}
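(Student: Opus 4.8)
The plan is to split off the simple submodule by producing an idempotent endomorphism of $M$ whose image is exactly that submodule. Let $T$ be a simple submodule of $M$. The first step is to invoke weak compressibility: since $T\neq 0$ and the zero submodule is semiprime, we have $T*T = T\Hom{M}{T}\neq 0$, so there exists some $f\in\Hom{M}{T}$ with $(T)f\neq 0$. Because $T$ is simple and $(T)f$ is a non-zero submodule of $T$, we get $(T)f=T$, and hence also $(M)f=T$ (as $(T)f\subseteq (M)f\subseteq T$); in particular $f$ maps $M$ onto $T$.

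Next I would apply Schur's Lemma. The restriction $f|_T\colon T\to T$ is a non-zero endomorphism of the simple module $T$, and $\End{T}$ is a division ring, so $f|_T$ is an automorphism of $T$. Write $w=(f|_T)^{-1}\in\End{T}$ for its inverse. The idea is that $f$ already surjects $M$ onto $T$, and $w$ corrects the action on $T$ so that the composite becomes a retraction.

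Concretely, define $e=fw$, viewed as an endomorphism of $M$ by composing $f\colon M\to T$ with $w\colon T\to T$ followed by the inclusion $T\hookrightarrow M$; explicitly $(m)e = ((m)f)w$, recalling that homomorphisms act on the right. I would then verify three properties. First, for $t\in T$ one has $(t)e=((t)f)w=(t)(f|_T)w=t$, so $e$ restricts to the identity on $T$. Second, $\Im{e}=(M)(fw)=(T)w=T$, since $f$ is onto $T$ and $w$ is an automorphism. Third, $e$ is idempotent: for every $m\in M$ the element $((m)f)w$ lies in $T$ and is therefore fixed by $e$, whence $(m)e^2=((m)e)e=(m)e$. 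Thus $e$ is an idempotent of $S$ with $\Im{e}=T$, and consequently $M=\Im{e}\oplus\Ker{e}=T\oplus\Ker{e}$, so $T$ is a direct summand.

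I do not expect a serious obstacle: the only real input is weak compressibility, which supplies a non-zero map $M\to T$, after which Schur's Lemma makes $f|_T$ invertible and the composite $e=fw$ splits off $T$ automatically. The one point that needs care is the bookkeeping with the right-hand action convention when checking $e^2=e$ and $e|_T=\mathrm{id}_T$, since the order of composition must be tracked consistently throughout.
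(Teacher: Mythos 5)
Your proof is correct and takes essentially the same route as the paper's: weak compressibility yields a homomorphism $f\colon M\to T$ that is non-zero on $T$, Schur's Lemma inverts $f|_T$, and the composite splits the inclusion $T\hookrightarrow M$. The only difference is cosmetic --- you package the splitting as the explicit idempotent $e=fw$ and verify $e^2=e$ directly, whereas the paper simply observes that the inverse $g$ of $f|_T$, regarded as a map $T\to M$, shows that $f$ splits.
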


\begin{proof}
Let $K$ be a simple submodule of $M$, then $0\neq K*K=K\Hom{M}{K}$ implies the existence of  $f:M\rightarrow K$ such that $f(K)$ is non-zero, i.e. $f(K)=K$ as $K$ is simple. By Schur's Lemma $\End{K}$ is a division ring and hence there exists an inverse $g\in\End{K}$ of $f$ restricted to $K$, i.e. $gf=id_K$.  Considering $g$ as a map from $K$ to $M$ we showed that $f$ splits, i.e. $K$ is a direct summand of $M$.
\end{proof}

\subsection{}
Since by the last Lemma, simple modules of a $*$-prime module are direct summands, we have the following

\begin{cor}\label{homsem}
Any weakly compressible module with DCC or ACC on direct summands and non-zero socle is homogeneous semisimple.
\end{cor}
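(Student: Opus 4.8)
The plan is to strip simple direct summands off $M$ one at a time and to let the chain condition on direct summands force this process to terminate after finitely many steps. First I would collect the structural facts already available. By the preceding Lemma, every simple submodule of $M$ is a direct summand; and, since $M$ has non-zero socle, by the discussion at the start of this section the socle $\Soc{M}$ is essential in $M$ and homogeneous, so that every simple submodule is isomorphic to one fixed simple module $E$. I would then record that both inputs are inherited by direct summands of $M$. A direct summand $M'$ is again weakly compressible: given $0\neq K\subseteq M'$ and $f\colon M\to K$ with $(K)f\neq 0$, the restriction $f|_{M'}\colon M'\to K$ satisfies $(K)(f|_{M'})=(K)f\neq 0$, so $K*K\neq 0$ computed inside $M'$ as well; hence the Lemma applies verbatim within $M'$. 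Essentiality also descends, because $\Soc{M'}=M'\cap\Soc{M}$ is essential in $M'$.

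Next I would run the peeling construction. Set $M_0=M$. As long as $M_k\neq 0$, essentiality gives $M_k\cap\Soc{M}\neq 0$, so $M_k$ contains a simple submodule $S_{k+1}$, which by the Lemma applied inside $M_k$ is a direct summand: $M_k=S_{k+1}\oplus M_{k+1}$. Unwinding the splittings yields $M=S_1\oplus\cdots\oplus S_{k+1}\oplus M_{k+1}$. Consequently each partial sum $S_1\oplus\cdots\oplus S_k$ is a direct summand of the original $M$, the modules form a strictly descending chain $M_0\supsetneq M_1\supsetneq M_2\supsetneq\cdots$ of direct summands (strict because $S_{k+1}\neq 0$), and the partial sums form a strictly ascending chain $S_1\subsetneq S_1\oplus S_2\subsetneq\cdots$ of direct summands.

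Now the chain condition does the work. Should the construction never halt, both displayed chains would be infinite and strictly monotone, contradicting whichever of ACC or DCC on direct summands is assumed; hence $M_n=0$ for some finite $n$. Then $M=S_1\oplus\cdots\oplus S_n$ is semisimple of finite length, and since homogeneity forces each $S_i\cong E$, the module $M$ is homogeneous semisimple.

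The step I expect to be the main obstacle is the bookkeeping that keeps the construction alive at each stage: one must check that every complement $M_{k+1}$ is still weakly compressible with essential socle, so that the Lemma keeps producing simple direct summands, and that the partial sums are genuinely direct summands of $M$ itself, so that both chains live in the lattice of direct summands of $M$ where the hypothesis is available. Once this is in place, matching the descending chain of the $M_k$ to the DCC hypothesis and the ascending chain of partial sums to the ACC hypothesis is routine.
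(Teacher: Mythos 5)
Your peeling argument---repeatedly splitting a simple summand off via the Lemma and letting either chain condition on direct summands halt the process---is exactly the derivation the paper intends (the paper offers nothing beyond ``we have the following''), and your bookkeeping is sound: weak compressibility passes to submodules, since for $K\subseteq M'\subseteq M$ any $f:M\rightarrow K$ with $(K)f\neq 0$ restricts to $M'$; the complements $M_k$ and the partial sums are genuinely direct summands of $M$; and both chains are strict. The step that does not hold up is the very first one: you import essentiality and homogeneity of $\Soc{M}$ ``from the discussion at the start of this section'', but that discussion is carried out for $*$-prime modules, and both properties genuinely fail for merely weakly compressible modules. Essentiality is what keeps your construction alive (it is what guarantees $M_k\cap\Soc{M}\neq 0$ at every stage), and homogeneity is what you invoke in the final sentence, so the gap sits at two load-bearing points of the proof.

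In fact the statement as literally printed is false, so no repair of your citation can work at this level of generality. Over $R=\ZZ$ (a prime ring), the faithful module $M=\ZZ/p\ZZ\oplus\ZZ$ is weakly compressible: a nonzero submodule $K$ either contains the torsion part, in which case the projection onto $\ZZ/p\ZZ$ witnesses $K*K\neq 0$, or $K=\ZZ(\bar a,n)$ is cyclic torsion-free and the map $(\bar c,d)\mapsto d(\bar a,n)$ sends $(\bar a,n)$ to $n(\bar a,n)\neq 0$. This $M$ has nonzero socle $\ZZ/p\ZZ$, and any chain of direct summands of $M$ has at most three terms, so ACC and DCC on direct summands hold; yet $M$ is not semisimple---precisely because the socle is not essential, so that your $M_1\cong\ZZ$ contains no simple submodule and the peeling stalls with $M_1\neq 0$. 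Similarly $\ZZ/p\ZZ\oplus\ZZ/q\ZZ$ for distinct primes $p,q$ is weakly compressible of finite length with non-homogeneous socle, so even the word ``homogeneous'' cannot be salvaged. Your proof becomes correct and complete once one adds the hypothesis that the socle is essential and homogeneous---which is exactly what the opening discussion supplies for $*$-prime modules, and is all that is used when the corollary is later applied to cyclic submodules of a $*$-prime module in the theorem on semilocal and qfd rings. So the flaw originates in the paper's own statement, but your proof inherits it rather than flags it: the appeal to ``the discussion at the start of this section'' is simply not available under the stated hypothesis ``weakly compressible''.
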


\subsection{}
Recall that a ring $R$ is said to be left quotient finite dimensional (qfd) if every cyclic left $R$-module has finite Goldie dimension. 
Any left noetherian or more general any ring with Krull dimension is qfd.

\begin{thm}\label{semilocal_qfd}
Let $R$ be a semilocal or a left qfd ring, then any $*$-prime module with non-zero socle has a prime endomorphism ring.
\end{thm}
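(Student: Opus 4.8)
The plan is to prove that under either hypothesis the module $M$ is in fact semisimple, and then to finish by observing that a semisimple module is automatically polyform. Indeed, a semisimple module has no proper essential submodules, so for any $K\subseteq L\subseteq M$ with $K$ essential in $L$ one has $K=L$ and $L/K=0$; the defining condition $\Hom{L/K}{M}=0$ of a non-$M$-singular module therefore holds vacuously. Once $M$ is semisimple it is thus polyform, and Proposition \ref{polyform} gives immediately that $S$ is prime. Throughout I will use two facts already at hand: since $M$ is $*$-prime with non-zero socle, $\Soc{M}$ is an essential, homogeneous semisimple submodule, and by the preceding Lemma every simple submodule of the weakly compressible module $M$ is a direct summand of $M$. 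So the whole problem reduces to establishing semisimplicity, which I treat separately in the two cases.

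For the semilocal case, set $J=\Jac{R}$. As $\Soc{M}$ is semisimple, $J\Soc{M}=0$. The submodule $JM$ is fully invariant, so by $*$-primeness it is either zero or essential. Suppose it were essential. Then $\Soc{M}\cap JM$ would be essential in the semisimple module $\Soc{M}$, forcing $\Soc{M}\subseteq JM$. Choose a simple submodule $T_0\subseteq\Soc{M}\subseteq JM$. For any $f\in\Hom{M}{T_0}$ one computes $(JM)f=J((M)f)\subseteq JT_0\subseteq J\Soc{M}=0$, so $f$ annihilates $T_0$; but $T_0$ is a direct summand, so its projection $\pi\in\Hom{M}{T_0}$ is the identity on $T_0$, a contradiction. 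Hence $JM=0$, so $M$ is a module over the semisimple Artinian ring $R/J$ and is therefore semisimple.

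For the left qfd case I show that every cyclic submodule of $M$ is contained in $\Soc{M}$. Fix $m\in M$. The cyclic module $Rm$ has finite Goldie dimension by hypothesis, so its socle $\Soc{Rm}=Rm\cap\Soc{M}$ has finite length and is a finite direct sum of simple submodules of $M$; moreover it is essential in $Rm$ because $\Soc{M}$ is essential in $M$. Using that each simple submodule of $M$ is a direct summand of $M$, an induction on the number of summands shows that such a finite direct sum of simple submodules is again a direct summand of $M$, say $M=\Soc{Rm}\oplus C$. By modularity $Rm=\Soc{Rm}\oplus(Rm\cap C)$, and the essentiality of $\Soc{Rm}$ in $Rm$ forces $Rm\cap C=0$, i.e. $Rm=\Soc{Rm}\subseteq\Soc{M}$. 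As $m$ was arbitrary, $M=\Soc{M}$ is semisimple.

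I expect the main obstacle to be the qfd step: since $M$ need not be finitely generated, there is no Nakayama-type leverage on $M$ itself, and the essential homogeneous socle can a priori be infinite, so one cannot simply split it off globally. The device that resolves this is to descend to cyclic submodules, where the finite Goldie dimension hypothesis turns the local socle into a \emph{finite} direct sum of simple direct summands, which then splits and collapses each $Rm$ onto its socle. In the semilocal case the analogous difficulty is the possibility $JM\neq 0$, which is excluded by the same splitting idea applied to the simple direct summand $T_0$. In both cases semisimplicity of $M$ is the key conclusion, after which polyformness and Proposition \ref{polyform} complete the argument.
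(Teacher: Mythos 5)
Your argument is correct, and it takes a genuinely different route from the paper in both halves. The paper handles the two hypotheses uniformly: it passes to an arbitrary cyclic submodule $C\subseteq M$, notes that $C$ is again $*$-prime with non-zero essential socle, uses finite Goldie dimension (qfd case) respectively finite dual Goldie dimension (semilocal case) to get ACC on direct summands, and invokes Corollary \ref{homsem} to conclude that $C$ is semisimple; then $M=\Soc{M}\simeq E^{(\Lambda)}$ is homogeneous semisimple and $\End{E^{(\Lambda)}}$ is observed directly to be prime. Your semilocal half avoids dual Goldie dimension altogether: the Nakayama-style observation $\Jac{R}\,\Soc{M}=0$, combined with the facts that $\Jac{R}M$ is fully invariant (hence zero or essential in a $*$-prime module) and that a simple $T_0\subseteq \Jac{R}M$ splits off, forces $\Jac{R}M=0$, so $M$ is a semisimple module over the semisimple artinian ring $R/\Jac{R}$; this is more direct and global than the paper's cyclic reduction, though it rests on the same characterization of semilocality ($R/\Jac{R}$ semisimple artinian) that underlies the paper's claim that cyclic modules then have finite dual Goldie dimension. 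Your qfd half keeps the cyclic reduction but replaces the appeal to Corollary \ref{homsem} by splitting the finite-length socle of $Rm$ off inside $M$; the induction you sketch is the standard one and does work (with $M=N\oplus C$ and $p$ the projection onto $C$, the image $(T_n)p$ is a simple submodule, hence a summand of $C$, and $N\oplus T_n=N\oplus (T_n)p$), and it has the side benefit of not needing the paper's unproved --- though true --- assertion that cyclic submodules of a $*$-prime module are again $*$-prime with essential socle. Finally, your endgame via ``semisimple implies polyform'' plus Proposition \ref{polyform} is valid but slightly roundabout: once $M=\Soc{M}$ you could conclude as the paper does, using the homogeneity of the socle of a $*$-prime module established at the start of Section 3, so that $M\simeq E^{(\Lambda)}$ has prime endomorphism ring; on the other hand, your route needs no homogeneity at all, which is a mild gain in economy.
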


\begin{proof}
If $M$ is a $*$-prime module with a non-zero socle, then $\Soc{M}$ is essential and homogeneous semisimple. 
Any cyclic $C$ submodule of $M$ is also a $*$-prime module with non-zero essential socle and by assumption has finite Goldie dimension (in case $R$ is qfd) or 
finite dual Goldie dimension (in case $R$ is semilocal). In either case $C$ has ACC on direct summands and by Corollary \ref{homsem} $C$ is homogeneous simple. 
Thus $M=\Soc{M}\simeq E^{(\Lambda)}$ is homogeneous semisimple and $\End{M}\simeq \End{E^{(\Lambda)}}$ is a prime ring.
\end{proof}

\subsection{}

Recall that a ring $R$ has left Krull dimension $0$ if it is left artinian and left Krull dimension $1$ if every proper cyclic left $R$-module $M\neq R$ is artinian.

\begin{prop}
 Any $*$-prime left module over a ring with left Krull dimension less or equal to $1$ has a prime endomorphism ring.
\end{prop}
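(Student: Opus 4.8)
The plan is to reduce the statement to two results already established, namely Proposition~\ref{amitsor}(1) and Theorem~\ref{semilocal_qfd}, by distinguishing according to whether $M$ is a singular $R$-module or not. The hypothesis enters through two channels: first, a ring of left Krull dimension at most $1$ has Krull dimension and is therefore left qfd, so Theorem~\ref{semilocal_qfd} will be applicable the moment we know that $\Soc{M}\neq 0$; second, the defining property of left Krull dimension $1$ (and, trivially, of left Krull dimension $0$, where $R$ is left artinian) is that every proper cyclic left module is artinian.

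If $M$ is not singular, there is nothing to prove: Proposition~\ref{amitsor}(1) applies verbatim to the faithful $*$-prime module $M$ and yields that $M$ is torsionless, hence that $S$ is prime. The entire content of the argument therefore lies in the opposite case, where $M$ is singular, and there the strategy is to produce a non-zero socle and then invoke Theorem~\ref{semilocal_qfd}. So assume $M$ is singular and choose any $0\neq m\in M$. Since every element of a singular module has essential annihilator and $R\neq 0$, the left ideal $\lann{R}{m}$ is essential, in particular non-zero, so that $Rm\cong R/\lann{R}{m}$ is a proper cyclic module. Invoking the Krull-dimension hypothesis, $Rm$ is a non-zero artinian module and hence has non-zero socle; as every simple submodule of $Rm$ is a simple submodule of $M$, we obtain $0\neq\Soc{Rm}\subseteq\Soc{M}$, and Theorem~\ref{semilocal_qfd} completes the proof.

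The main obstacle is precisely this singular case: one must rule out the persistence of a zero socle, and the only mechanism I see is to descend to a cyclic submodule $Rm$, use singularity to make $\lann{R}{m}$ essential (hence non-zero), and then appeal to the Krull-dimension hypothesis so that this proper cyclic module becomes artinian and thus feeds a simple submodule into $M$. One small point deserves to be checked carefully along the way, namely that a singular cyclic module $R/\lann{R}{m}$ is genuinely proper, i.e.\ not isomorphic to $R$: this holds because $R$ is not singular, the identity $1$ having zero (hence non-essential) annihilator, whereas $R/\lann{R}{m}$ is singular, and singularity is preserved under isomorphism.
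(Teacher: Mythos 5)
Your proof is correct and follows essentially the same route as the paper: split on whether $M$ is singular, apply Proposition~\ref{amitsor} in the non-singular case, and in the singular case use essentiality of $\lann{R}{m}$ to get a proper cyclic submodule, which is artinian by the Krull dimension hypothesis, yielding $\Soc{M}\neq 0$ and hence primeness via Theorem~\ref{semilocal_qfd}. Your extra check that a singular cyclic module cannot be isomorphic to $R$ is a welcome justification of the word ``proper,'' which the paper asserts without comment.
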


\begin{proof}
 Let $R$ be a ring with left Krull dimension $\leq 1$ and let $M$ be a $*$-prime left $R$-module. 
 If $M$ is not singular, then it has a prime endomorphism ring by Proposition \ref{amitsor}. 
 Suppose that $M$ is singular and let $C$ be a non-zero cyclic submodule of $M$, then $C$ is also singular and hence proper, i.e. $C\simeq R/I$ with $I\neq 0$. 
 By hypothesis $R$ has Krull dimension $\leq 1$ and thus $C$ is a artinian. This shows that $M$ has a non-zero socle. By \ref{semilocal_qfd} $M$ has a prime endomorphism.
\end{proof}

This implies that for instance any $*$-prime module over the first Weyl algebra $A_1$ has a prime endomorphism ring.

\section{Conclusion}
Let $C(R)$ denote the center of $R$. 
Faithful *-prime modules $M$ that are not singular have prime endomorphism ring by Proposition \ref{amitsor}. This applies in particular to the following case:
\begin{enumerate}
    \item if $M$ has a non-zero submodule which is finitely generated over $C(R)$ or
    \item if $R$ has a non-zero left ideal which is finitely generated over $C(R)$ or
    \item if $R$ has non-zero left socle
\end{enumerate}

In case $(1)$, if $N=C(R)x_1+\cdots+C(R)x_n$, then $$0=\Ann{M}=\Ann{N}=\Ann{x_1}\cap\cdots \cap \Ann{x_n}.$$
Thus not all of the elements $x_i$ can be singular and $M$ is not a singular module. Case $(2)$ reduces to the first case, because if $I$ is a non-zero left ideal of $R$ which is finitely generated 
over $C(R)$, then since $M$ is faithful, there must exists a non-zero element $m\in M$ with $N=Im\neq 0$. But then $N$ is a non-zero submodule of $M$ which is finitely generated over $C(R)$ and $(1)$ applies.

In case $(3)$ we also see that due to $0=\Ann{M}=\bigcap_{x\in M} \Ann{x}$  not all the annihilators $\Ann{x}$ can be essential left ideals, since otherwise the left socle would be contained in 
$\Ann{M}$ and would be zero. Hence $M$ is not a singular module.

From the preceeding we can conclude that if there exists a $*$-prime faithful left $R$-module $M$ whose endomorphism ring is not prime, then

\begin{itemize}
 \item $R$ is not a left duo ring;
 \item $R$ has zero left socle 
 \item $R$ does not contain any non-zero left ideal which is finitely generated over $C(R)$;
 \item the Krull dimension of $R$ is greater than $1$;
\item $\End{M}$ is not commutative;
\item $M$ is a singular left $R$-module which is neither torsionless nor semi-projective;
\item $M$ is not polyform, i.e. $M$ is cogenerated by some $M$-singular submodule;
\item no non-zero submodule of $M$ is finitely generated over the centre $C(R)$ of $R$;
\item if $M$ has non-zero socle, then $R$ cannot be semilocal nor can $R$ have Krull dimension.
\end{itemize}

\bibliographystyle{plain}
\bibliography{BaziarLomp}


\end{document}